\newtheorem{theorem}{Theorem}[section]
\newtheorem{proposition}[theorem]{Proposition}
\newtheorem{corollary}[theorem]{Corollary}
\theoremstyle{remark} 
\begin{document}

\title[Solitons from Minimal surfaces]{One parameter family of solitons from minimal surfaces }

\author{Rukmini Dey and Pradip Kumar}
\address{School of Mathematics\\
Harish Chandra Research Institute\\
Allahabad, 211019, India\\
rkmn@mri.ernet.in, pmishra@mri.ernet.in}

\begin{abstract}
In this paper, we discuss a one parameter family of complex Born-Infeld 
solitons arising
from a one parameter family of minimal surfaces. 
 The process enables us to generate a new solution of the B-I equation
 from a given complex solution of a special type (which are abundant).   
We illustrate this with many examples.
We find that the action or the energy  of this family of solitons 
remains invariant in this family and find that the well-known Lorentz symmetry
of the B-I equations is responsible for it. 
\end{abstract}

\maketitle

\section{Introduction}

In a previous paper ~\cite{D}, using hodographic coordinates, we found
the general solution to the minimal surface equation, namely a variant of the 
Weirstrass-Enneper representation of the minimal surface.
This was done by wick rotating the general Born-Infeld soliton solution by 
Barbishov and Chernikov 
discussed in the last section of ~\cite{W}. Underlying this, there was the observation that  the minimal surface equation
$$(1+ \phi_t^2) \phi_{xx} - 2 \phi_x \phi_t \phi_{xt} + (1 + \phi_x^2) \phi_{tt} =0$$  
and the Born-Infeld equation 
$$(1- \phi_t^2) \phi_{xx} + 2 \phi_x \phi_t \phi_{xt} - (1 + \phi_x^2) \phi_{tt}
=0$$
can be obtained one from the other by wick rotation of the variable $t$.

We know that if $X (r, \bar{r})= (x_1(r, \bar{r}) , t_1(r, \bar{r}), \phi_1(r, \bar{r})) $ 

and $Y(r, \bar{r}) = (x_2(r, \bar{r}), t_2(r, \bar{r}), \phi_2(r, \bar{r})) $ are two minimal surfaces in isothermal coordinates $(r_1, r_2)$, where $r= r_1 + i r_2,$  which are  
harmonic conjugate  to 
each other, then $ \tilde{X} (r, \bar{r}, \theta)  = cos(\theta) X(r, \bar{r})  + sin (\theta) Y (r, \bar{r}) $ is again a 
minimal surface 
for each $\theta$, ~\cite{DoC}, page 213. Thus if we wick-rotate $t \rightarrow i t,$ we get a 
one parameter family of (complex) solitons, namely, 
$S(r, \bar{r}, \theta) =  cos(\theta) X^s  + sin (\theta) Y^s$, where  $X^s(r,\bar{r}) = (x_1(r, \bar{r}) , it_1(r, \bar{r}), \phi_1(r, \bar{r}))$, $Y^s (r, \bar{r}) = (x_2(r, \bar{r}), it_2(r, \bar{r}), \phi_2(r, \bar{r})).$
We find the $F$ and $G$ functions corresponding to these complex solitons, 
(notation as in  ~\cite{W} page 617).

The process described here enables us to generate other solutions of the B-I, 
given one complex solution which  can be 
wick rotated to get a real minimal surface (which can be then be 
written in isothermal coordinates using 
the Weierstrass-Enneper repesentation). Then one can easily write the harmonic 
conjugate of the minimal surface in the same form and then make the 
one-parameter combination of the 
two mentioned above and wick rotate back to 
get the soliton family which starts from a soliton solution which is the 
initial solution with $ t \rightarrow - t$, (note that the B-I 
equation is invariant under $t \rightarrow -t$), and ends at a different 
soliton solution.  We give many examples of this process.

The paper is organized as follows.
We first give one example illustrating the case, namely that of the wick 
rotated helicoid and wick rotated the catenoid (since the catenoid is the  
harmonic conjugate of the helicoid).

Next we show that the first fundamental form, namely $E^s$, $G^s$ and $F^s$ 
are independent of $\theta$ and hence the action $A^s$ is invariant under 
$\theta$. This is due to a symmetry of the B-I equation which we explicitly 
show.

In the last section we give many examples illustrating the process described 
in the paper.

\section{The one parameter family of solitons}

Let $X (r, \bar{r})= (x_1(r, \bar{r}) , t_1(r, \bar{r}), \phi_1(r, \bar{r})) $ 

and $Y(r, \bar{r}) = (x_2(r, \bar{r}), t_2(r, \bar{r}), \phi_2(r, \bar{r})) $

be minimal surfaces which are harmonic conjugates of each other, given by the 
parameter $r$ and its conjugate. They are isothermal  in $r_1$ and $r_2$, where 
$r = r_1 + i r_2$.
Then we know that $cos(\theta) X  + sin (\theta) Y$ is a minimal surface for every $\theta,$ ~\cite{DoC}.

Then 
 $X^s (r, \bar{r})= (x_1(r, \bar{r}) , i t_1(r, \bar{r}), \phi_1(r, \bar{r})) $ 

 and  $Y^s(r, \bar{r}) = (x_2(r, \bar{r}), i t_2(r, \bar{r}), \phi_2(r, \bar{r})) $ are Born-Infeld  solitons for $imaginary$  time $i t_1$ and $i t_2$.

$X^s$ and $Y^s$ are complex solitons. The superscript 
$s$ stands for soliton.

\begin{proposition}
 $S_{\theta} = cos(\theta) X^s  + sin (\theta) Y^s$ are complex Born-Infeld 
solitons for every $\theta$.
\end{proposition}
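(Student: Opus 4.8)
The plan is to reduce the statement to the result of do Carmo already quoted, using the Wick rotation that the introduction identifies as the bridge between the minimal surface equation and the Born--Infeld equation. First I would record the elementary but essential observation that the family $S_\theta$ is itself the Wick rotation of a single family of minimal surfaces. Writing $M_\theta = cos(\theta) X + sin(\theta) Y$, linearity gives
\[
M_\theta = (cos(\theta) x_1 + sin(\theta) x_2,\ cos(\theta) t_1 + sin(\theta) t_2,\ cos(\theta)\phi_1 + sin(\theta)\phi_2),
\]
and since the factor $i$ in $X^s, Y^s$ multiplies only the middle coordinate, $cos(\theta)(it_1) + sin(\theta)(it_2) = i\,(cos(\theta) t_1 + sin(\theta) t_2)$. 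Hence $S_\theta$ is obtained from $M_\theta$ precisely by the substitution $t \to it$ in its second coordinate; the Wick rotation commutes with forming the one parameter combination. Taking $\theta = 0$ recovers $X^s$, which is consistent with the standing hypothesis that $X^s$ is a soliton.

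By the theorem of do Carmo ~\cite{DoC}, $M_\theta$ is a minimal surface for every $\theta$, since $X$ and $Y$ are harmonic conjugate isothermal minimal surfaces. Near a regular point I would express $M_\theta$ as a graph $\phi = \Phi_\theta(x,t)$ over its base coordinates $(x_\theta, t_\theta) = (cos(\theta) x_1 + sin(\theta) x_2,\ cos(\theta) t_1 + sin(\theta) t_2)$, so that minimality is equivalent to $\Phi_\theta$ solving the minimal surface equation displayed in the introduction.

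The decisive step is the intertwining of the two equations. Setting $\Psi_\theta(x,s) = \Phi_\theta(x,-is)$, the chain rule sends $\phi_t^2 \mapsto -\Psi_s^2$, $\phi_{tt} \mapsto -\Psi_{ss}$, $\phi_{xt}\mapsto i\Psi_{xs}$, and so flips the sign of the cross term; hence $\Phi_\theta$ solves $(1+\phi_t^2)\phi_{xx} - 2\phi_x\phi_t\phi_{xt} + (1+\phi_x^2)\phi_{tt}=0$ if and only if $\Psi_\theta$ solves $(1-\phi_s^2)\phi_{xx} + 2\phi_x\phi_s\phi_{xs} - (1+\phi_x^2)\phi_{ss}=0$. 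This is exactly the observation of the introduction that the two equations correspond under $t\to it$, and it holds as a formal algebraic identity over $\CC$, so the complex-valuedness introduced by the rotation causes no difficulty. Since the second coordinate of $S_\theta$ is $s = it_\theta$ while its height is $\Phi_\theta(x_\theta,t_\theta) = \Psi_\theta(x_\theta, it_\theta)$, the height function of $S_\theta$ solves the Born--Infeld equation, i.e. $S_\theta$ is a complex Born--Infeld soliton for every $\theta$.

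The step I expect to require the most care is the passage from the parametric, isothermal description in which do Carmo's linear-combination property is natural, to the graph description in which the Born--Infeld equation is stated: one must justify that eliminating the hodographic parameters $(r,\bar{r})$ yields a well-defined height function after the complex substitution, and that the chain-rule manipulation stays valid once the middle coordinate is imaginary. Everything else is bookkeeping --- the linearity that lets the Wick rotation commute with the $\theta$-combination, and the purely algebraic sign changes relating the two PDEs.
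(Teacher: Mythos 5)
Your proof is correct in outline, but it takes a genuinely different route from the paper's. The paper never passes to the nonparametric graph form at all: it writes $X$ and $Y$ in the hodographic representation of ~\cite{D}, namely $x-it = F(r) - \int \bar r^2 G'(\bar r)\,d\bar r$, $x+it = G(\bar r) - \int r^2 F'(r)\,dr$, $\phi = \int rF'(r)\,dr + \int \bar r G'(\bar r)\,d\bar r$, observes that the $\theta$-combination after Wick rotation is again exactly of this form with $F^s_\theta = \cos\theta\, F_1 + \sin\theta\, F_2$ and $G^s_\theta = \cos\theta\, G_1 + \sin\theta\, G_2$, and then invokes the fact that anything of this parametric form is the Barbishov--Chernikov general solution of the Born--Infeld equation (~\cite{W}, p.~617). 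That argument is purely algebraic in the integral representation and, as a bonus, produces the explicit $F^s_\theta$, $G^s_\theta$ that the paper reuses in the helicoid/catenoid example. Your route instead combines do Carmo's linear-combination theorem with the PDE-level Wick rotation: your chain-rule computation ($\Phi_t = i\Psi_s$, $\Phi_{tt} = -\Psi_{ss}$, $\Phi_{xt} = i\Psi_{xs}$, flipping the sign of the cross term) is correct and does convert the minimal surface equation into the B--I equation. What your approach buys is conceptual economy --- it needs no explicit integral representation and makes transparent why the do Carmo family survives the rotation. What it costs is precisely the care you flag yourself: you must assume the surface is locally a graph over the $(x,t)$-plane (which fails where the tangent plane is vertical, whereas the parametric form does not care), and you must use real-analyticity of minimal graphs to make sense of $\Phi_\theta(x,-is)$ at imaginary argument. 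Since the paper itself only claims complex solutions and works locally, neither caveat is fatal, but if you want the explicit $F$ and $G$ functions for the family (which the paper needs later), you would still have to redo the computation the paper's proof performs.
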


\begin{proof} 
We will put $S_{\theta}$ in the form in ~\cite{W}, last section. 
According to ~\cite{D}

$X = (x_1(r, \bar{r}) ,  t_1(r, \bar{r}), \phi_1(r, \bar{r}))$ is a minimal 
surface implies

$x_1 - i t_1 = F_1(r) - \int \bar{r}^2 G_1^{\prime} ( \bar{r}) d \bar{r}$

$x_1 + it_1 = G_1(\bar{r}) - \int r^2 F_1^{\prime}(r) d r$

$ \phi_1 = \int r F_1^{\prime}(r) + \int \bar{r} G_1^{\prime}(\bar{r}) d \bar{r}$

where $F_1$ and $G_1$ are related by $F_1(r) = \overline{G_1(\bar{r})}$.

Similarly, 
$Y = (x_2(r, \bar{r}) ,  t_2(r, \bar{r}), \phi_2(r, \bar{r}))$ is a minimal 
surface implies,

$x_2 - i t_2 = F_2(r) - \int \bar{r}^2 G_2^{\prime} ( \bar{r}) d \bar{r}$

$x_2 + it_2 = G_2(\bar{r}) - \int r^2 F_2^{\prime}(r) d r$

$ \phi_2 = \int r F_2^{\prime}(r) + \int \bar{r} G_2^{\prime}(\bar{r}) d \bar{r}$

where $F_2$ and $G_2$ are related by $F_2(r) = \overline{G_2(\bar{r})}$.  

Then 
\begin{eqnarray*} 
& & S_{\theta} = (x_{\theta}^{s}, t_{\theta}^{s}, \phi_{\theta}^{s}) =cos(\theta) 
X^s + sin(\theta) Y^s   \\
& & = (cos(\theta) x_1 + sin(\theta) x_2 , i cos(\theta) t_1 + i sin(\theta) t_2, cos(\theta) \phi_1 + sin(\theta) \phi_2) 
\end{eqnarray*}
where recall superscript $s$ stands for soliton.
\begin{eqnarray*} 
x_{\theta}^{s} - t_{\theta}^{s} &=& cos(\theta) F_1(r) + \sin(\theta) F_2(r)  \\
&-&  \int (\bar{r}^2 (cos(\theta) G_1^{\prime} (\bar{r}) + \sin(\theta) G_2^{\prime} ( \bar{r})) d \bar{r} \\
&=& F_{\theta}^{s} (r) -  \int \bar{r}^2 G_{\theta} ^{s \prime}(\bar{r}) d \bar{r}
\end{eqnarray*}
where $ F_{\theta}^{s} (r) = cos(\theta) F_1(r) + \sin(\theta) F_2(r)$ and 
 $G_{\theta}^{s}(\bar{r})= cos(\theta) G_1 (\bar{r}) + \sin(\theta) G_2 ( \bar{r})$. 

One can easily check that

$x_{\theta}^{s} + t_{\theta}^{s} =  G_{\theta}^{s} (\bar{r}) -  \int r^2 F_{\theta}^{s \prime}(r) d r $

$\phi_{\theta}^s = \int r F_{\theta}^{s \prime} (r)  + \int \bar{r} G_{\theta}^{s \prime} (\bar{r}).$

Renaming variables, $\bar{r} = s$, we get this is exactly in the form of 
solutions to the Born Infeld equation as in ~\cite{W}, page 617.
Thus $S_{\theta}$ is a (complex) Born-Infeld soliton.
\end{proof}

\begin{corollary}
The partial derivatives of $S_{\theta}$ with respective to $\theta$ are also soliton 
solutions.
\end{corollary}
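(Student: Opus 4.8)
The plan is to exploit the fact that differentiation in $\theta$ merely rotates the parameter, so that $\partial_\theta S_\theta$ is again a member of the family to which the Proposition already applies. First I would compute
$$\frac{\partial S_\theta}{\partial \theta} = -\sin(\theta)\, X^s + \cos(\theta)\, Y^s,$$
and identify the right-hand side as $S_{\theta+\pi/2}$, using $\cos(\theta+\pi/2)=-\sin(\theta)$ and $\sin(\theta+\pi/2)=\cos(\theta)$. Hence $\partial_\theta S_\theta = S_{\theta+\pi/2}$, and the Proposition immediately gives that this is a complex Born-Infeld soliton. This disposes of the first-order derivative with essentially no computation.

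For the higher-order derivatives I would establish, by an obvious induction, the identity $\partial_\theta^{\,n} S_\theta = S_{\theta + n\pi/2}$ for every $n \ge 1$; each differentiation shifts the phase by another $\pi/2$. Since $S_\psi$ is a soliton for every $\psi$ by the Proposition, every $\partial_\theta^{\,n} S_\theta$ is a soliton as well. Equivalently, and closer in spirit to the Proposition's proof, one may note that the hodographic data $F_\theta^s = \cos(\theta)F_1 + \sin(\theta)F_2$ and $G_\theta^s = \cos(\theta)G_1 + \sin(\theta)G_2$ depend on $\theta$ only through the scalar coefficients $\cos\theta$ and $\sin\theta$; differentiating in $\theta$ simply replaces them by $-\sin\theta$ and $\cos\theta$, leaving $F$ and $G$ of exactly the linear-combination form required there.

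I expect no serious obstacle; the only point deserving attention is interpretive rather than computational. One must confirm that asserting ``$\partial_\theta S_\theta$ is a soliton'' means the derivative triple $(\partial_\theta x_\theta^s, \partial_\theta t_\theta^s, \partial_\theta \phi_\theta^s)$ is again presented in the hodographic form of ~\cite{W}, rather than merely that its $\phi$-component solves the B-I equation abstractly. The phase-shift identity settles precisely this, since $\partial_\theta S_\theta$ is literally equal to $S_{\theta+\pi/2}$, whose hodographic presentation the Proposition already supplies.
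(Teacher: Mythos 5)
Your proposal is correct and follows essentially the same route as the paper: both identify $\partial_\theta^{\,n} S_\theta$ with $\cos(\theta+n\pi/2)X^s+\sin(\theta+n\pi/2)Y^s$, i.e.\ with $S_{\theta+n\pi/2}$, and invoke the Proposition. The paper simply writes out the first four derivatives explicitly rather than phrasing it as an induction.
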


\begin{proof}

$\frac{\partial S_{\theta}}{\partial \theta} = cos(\theta + \frac{\pi}{2})  X^s + sin(\theta + \frac{\pi}{2}) Y^s$

$ \frac{\partial^2 S_{\theta}}{\partial^2 \theta} = cos(\theta + \pi)  X^s + sin(\theta + \pi) Y^s$

$ \frac{\partial^3 S_{\theta}}{\partial^3 \theta} = cos(\theta -\frac{\pi}{2}) X^s + sin(\theta - \frac{\pi}{2}) Y^s$

 $      \frac{\partial^4 S_{\theta}}{\partial^4 \theta}     =S_{\theta}$ 

These are again of the form  $cos(\theta_0)  X^s + sin(\theta_0) Y^s$ and thus are soliton solutions.
\end{proof}

\section{An example:}

Let us write the catenoid and the helicoid (two conjugate minimal surfaces)
in a variant of their  Weirstrass-Enneper representation, ~\cite{D}, ~\cite{N},  which is also isothermal.

\begin{proposition}
a) The helicoid can be written in a parametrised form in the following way:

$x_1 = -\frac{1}{2} \rm{Im} ( r + \frac{1}{r})$

$t_1 = \frac{1}{2} \rm{Re} ( r - \frac{1}{r})$

$\phi_1 =  \rm{Im} (ln r) $

b) The catenoid can be written in a parametrised form in the following way:

$x_2 = \frac{1}{2} \rm{Re} ( r + \frac{1}{r} )$

$t_2 = \frac{1}{2} \rm{Im} ( r - \frac{1}{r}) $

$ \phi_2 = - \rm{Re} (ln r)$
\end{proposition}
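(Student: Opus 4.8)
The plan is to realize each surface directly in the representation recorded from \cite{D}, namely
\[
x - it = F(r) - \int \bar r^2 G'(\bar r)\,d\bar r,\qquad
x + it = G(\bar r) - \int r^2 F'(r)\,dr,\qquad
\phi = \int r F'(r)\,dr + \int \bar r\, G'(\bar r)\,d\bar r,
\]
subject to the reality condition $F(r) = \overline{G(\bar r)}$. Since the whole representation is generated by a single holomorphic datum $F$ (equivalently $G$), it suffices to exhibit the correct $F$ for each surface and then check that the three resulting coordinate functions, after separating real and imaginary parts, coincide with the stated formulas. Isolating the coordinates is a matter of forming $x-it$ together with its conjugate $x+it=\overline{x-it}$ (valid because $x_1,t_1,\phi_1$ are real here), and using $z-\bar z = 2i\,\mathrm{Im}(z)$, $z+\bar z = 2\,\mathrm{Re}(z)$.

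For part (a) I would take $F_1(r) = \tfrac{i}{2r}$, hence $G_1(\bar r) = -\tfrac{i}{2\bar r}$; these satisfy $\overline{G_1(\bar r)} = \tfrac{i}{2r} = F_1(r)$, so the reality condition holds. Substituting into the first equation gives $x_1 - it_1 = \tfrac{i}{2r} - \tfrac{i\bar r}{2}$, and the real/imaginary split reproduces $x_1 = -\tfrac12\mathrm{Im}(r + 1/r)$ and $t_1 = \tfrac12\mathrm{Re}(r-1/r)$. The third equation yields $\phi_1 = -\tfrac{i}{2}\ln r + \tfrac{i}{2}\ln\bar r = \tfrac{i}{2}(\ln\bar r - \ln r) = \mathrm{Im}(\ln r)$. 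For part (b) the corresponding datum is $F_2(r) = \tfrac{1}{2r}$, $G_2(\bar r) = \tfrac{1}{2\bar r}$, and the identical computation gives $x_2 - it_2 = \tfrac{1}{2r} + \tfrac{\bar r}{2}$, from which $x_2,t_2$ and $\phi_2 = -\mathrm{Re}(\ln r)$ follow. The key structural point is that $F_1 = i\,F_2$: the two data differ by the constant factor $i$, which is precisely the rotation by $\pi/2$ in the associated family, and this is exactly what exhibits the catenoid and helicoid as harmonic conjugates — the very hypothesis on which the construction of the previous section rests.

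To confirm that these parametrizations are the classical surfaces and not merely abstract solutions of the representation, I would substitute $r = e^{u+iv}$, so that $r\pm 1/r = 2\cosh(u+iv)$ or $2\sinh(u+iv)$ and $\ln r = u+iv$. This turns (a) into $(-\sinh u\sin v,\ \sinh u\cos v,\ v)$, a line ruled along the $v$-axis as the height $v$ increases, i.e.\ the helicoid, and (b) into $(\cosh u\cos v,\ \cosh u\sin v,\ -u)$, the surface of revolution of a catenary, i.e.\ the catenoid. The isothermal property claimed in the section heading then follows from the general fact that the representation of \cite{D} is written in the conformal parameter $r$.

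The computations are routine; the one point that genuinely requires care is the behaviour of the logarithm. For the catenoid $\phi_2 = -\mathrm{Re}(\ln r) = -\ln|r|$ is single valued on the punctured plane, whereas for the helicoid $\phi_1 = \mathrm{Im}(\ln r) = \arg r$ is multivalued, so the helicoid parametrization is really defined on the universal cover of $\CC\setminus\{0\}$ (or on a cut plane). Keeping track of this branch, verifying the reality condition $F=\overline{G}$, and fixing the correct signs and orientation when separating real and imaginary parts are the only delicate steps; everything else is direct substitution into the representation of \cite{D}.
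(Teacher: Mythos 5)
Your proof is correct, but it runs in the opposite direction from the paper's. The paper starts from the non-parametric forms $\phi=\tan^{-1}(t/x)$ and $\phi=\cosh^{-1}\sqrt{x^2+t^2}$, computes the hodographic data $u=\phi_{\bar z}$, $v=\phi_{z}$, applies the coordinate change $r=(\sqrt{1+4uv}-1)/(2v)$ of \cite{D}, and thereby \emph{derives} $z=\frac{i}{2}(r-\frac{1}{\bar r})$ for the helicoid and $\bar z=\pm\frac12(\bar r+\frac1r)$ for the catenoid (resolving the sign by matching the non-parametric form), reading off $F$ and $G$ along the way. You instead \emph{posit} the holomorphic data $F_1=\frac{i}{2r}$, $F_2=\frac{1}{2r}$, substitute into the representation of \cite{D}, and verify a posteriori that the resulting parametrizations are the classical helicoid and catenoid via $r=e^{u+iv}$. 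Both are legitimate proofs of the stated proposition: the paper's route explains where the data $F$ come from and ties the parametrization to the graphs by which the surfaces are introduced, at the cost of the hodograph machinery and a branch/sign choice; your route is shorter and self-contained once the $F$'s are guessed, and your identification of the surfaces by the exponential substitution is arguably a cleaner certification than the paper's. Your observation that $F_1=iF_2$ encodes the harmonic conjugacy anticipates exactly the check the paper performs immediately after the proposition (holomorphy of $x_1+ix_2$, $t_1+it_2$, $\phi_1+i\phi_2$), and your remark on the multivaluedness of $\arg r$ for the helicoid is a genuine subtlety the paper passes over in silence.
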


\begin{proof}
a) The non parametric form of helicoid is $\phi(x,t)= \tan^{-1}\frac{t}{x}$. 
 As $\phi_x=\frac{-t}{x^2+t^2}$ and $\phi_t= \frac{x}{x^2+t^2}$, we have $u=\phi_{\bar{z}}= \phi_x x_{\bar{z}}+ \phi_t t_{\bar{z}}.$

That is $u =\frac{-t+ix}{2(x^2+t^2)}= \frac{iz}{2|z|^2}$, where $z = x + i t$. 

\begin{equation}u=\frac{i}{2\bar{z}}\label{u}\end{equation}

Similarly we have 

\begin{equation}v=\frac{-i}{2z}\label{v}\end{equation}

Let us make the following coordinate change, ~\cite{D}, ~\cite{W}:
\begin{equation} 
 r = \frac{\sqrt{1 + 4 u v } - 1}{2 v}. \label{r}
\end{equation}
Then 

\begin{equation} u=\frac{r}{1-|r|^2}\text{ and } v=\frac{\bar{r}}{1-|r|^2}\label{uv}\end{equation}

Equation \ref{u}, \ref{v} and \ref{uv} gives

\begin{equation} z=\frac{i}{2}(r-\frac{1}{\bar{r}})\label{z}\end{equation}

which in turn gives

\begin{equation}
x= -\frac{1}{2} \rm{Im} (r+\frac{1}{r})\text{ and }t=\frac{1}{2} \rm{Re}(r-\frac{1}{r})\end{equation}

also from equation \ref{z}, we have $F(r)= \frac{i}{2r}$ and hence $G(\bar{r})=\frac{-i}{2\bar{r}}$, ~\cite{D}. 

Then we have $\phi(r)= \int r F'(r) dr+\int \bar{r} G'(\bar{r})d\bar{r}$, ~\cite{D}, and thus
$\phi(r)= \frac{-i}{2}[\ln r - \ln\bar{r}]$, that is we have:

\begin{equation}
\phi(r)=\rm{Im}(\ln r)\label{phi}
\end{equation}

(b) The nonparametric form of catenoid is

$\phi(x,t)= \cosh^{-1}\sqrt{x^2+t^2}$.
As seen in helicoid case, for the catenoid we have:\\

 As $\phi_x=\frac{x}{\sqrt{x^2+t^2-1}\sqrt{x^2+t^2}}$ and $\phi_t= \frac{t}{\sqrt{x^2+t^2-1}\sqrt{x^2+t^2}}$, and $$u=\phi_{\bar{z}}= \phi_x x_{\bar{z}}+ \phi_t t_{\bar{z}}=\frac{\phi_x+i\phi_t}{2}$$

That is $u =\frac{z}{2\sqrt{x^2+t^2-1}\sqrt{x^2+t^2}}.$

Again with the same coordinate change as in equation \ref{r},  \ref{uv}  and $u$  as above we have $\frac{z}{\bar{z}}=\frac{r}{\bar{r}}$, that is: \begin{equation}z=\frac{r}{\bar{r}}\bar{z}.\label{z2}\end{equation}

Now as we have 
$$u =\frac{z}{2\sqrt{x^2+t^2-1}\sqrt{x^2+t^2}}=\frac{z}{2\sqrt{|z|^2-1}\sqrt{ |z|^2}}.$$ That is $$\frac{r}{1-|r|^2}= \frac{z}{2\sqrt{|z|^2-1}\sqrt{|z|^2}}$$ Squaring it we have

$$\frac{z^2}{4(|z|^2-1).|z|^2}=\frac{r^2}{(1-|r|^2)^2}$$

Using equation \ref{z2}, we have $$4 |r|^2(\frac{r}{\bar{r}} \bar{z}^2-1)=(1-|r|^2)^2$$
That is $$\bar{z}^2= \frac{\bar{r}}{r}\left(\frac{(1-|r|^2)^2}{4|r|^2}+1\right)$$

$$\bar{z}= \pm\frac{1}{2}(\bar{r}+\frac{1}{r}).$$ We take the positive sign, 
because this gives us the non-parametric form. Hence in this case we have:

\begin{equation*} x= \frac{1}{2} \rm{Re}(r+\frac{1}{r})\text{, } \\
t=\frac{1}{2} \rm{Im}(r-\frac{1}{r})\text{, }\\
\phi(r)= -\rm{Re}(\ln r)\label{catenoid}
\end{equation*}
\end{proof}

It is easy to check that the catenoid is conjugate harmonic to the helicoid 
 because

$x_1 + i x_2 = i ( r + \frac{1}{r}) $ 

$ t_1 + i t_2 = r - \frac{1}{r} $

$ \phi_1 + i \phi_2 = - i ln r $

so that the right hand sides of all the expressions are analytic functions of 
the complex variable $r$.

\begin{proposition} 
$F^s_{\theta} = \frac{i}{2} \frac{e^{-i \theta}}{r}$ and 
$G^s_{\theta} = \frac{-i}{2} \frac{e^{i \theta}}{\bar{r}}$
are the $F$ and $G$ functions for our family of soliton solutions.
\end{proposition}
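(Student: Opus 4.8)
The plan is to obtain the individual Weierstrass data $F_1,G_1$ and $F_2,G_2$ of the helicoid and the catenoid and then feed them into the identity already established in the proof of the first Proposition of this section, namely
\[
F^s_\theta(r) = \cos(\theta)\,F_1(r) + \sin(\theta)\,F_2(r), \qquad G^s_\theta(\bar r) = \cos(\theta)\,G_1(\bar r) + \sin(\theta)\,G_2(\bar r).
\]
Here I would first note that the wick rotation leaves the representation untouched: the soliton combination $x^s_\theta - t^s_\theta$ equals $\cos(\theta)(x_1-it_1)+\sin(\theta)(x_2-it_2)$, so the $F$ and $G$ functions of each soliton are exactly those of the underlying minimal surface. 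It therefore suffices to read off $F_1,G_1,F_2,G_2$ from the parametrisations of the preceding Proposition.

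For the helicoid these are already in hand from that proof: from $z=x_1+it_1=\frac{i}{2}\bigl(r-\frac{1}{\bar r}\bigr)$ one has $F_1(r)=\frac{i}{2r}$ and, by the conjugacy relation $F_1(r)=\overline{G_1(\bar r)}$, $G_1(\bar r)=\frac{-i}{2\bar r}$. For the catenoid I would carry out the analogous step: form $x_2+it_2$ from $x_2=\frac12\mathrm{Re}\bigl(r+\frac1r\bigr)$ and $t_2=\frac12\mathrm{Im}\bigl(r-\frac1r\bigr)$, which after the real--imaginary bookkeeping collapses to $x_2+it_2=\frac12\bigl(r+\frac{1}{\bar r}\bigr)$, and then match this against the standard form $x_2+it_2=G_2(\bar r)-\int r^2 F_2'(r)\,dr$ from \cite{D}. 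This identifies $F_2(r)=\frac{1}{2r}$ and $G_2(\bar r)=\frac{1}{2\bar r}$, which one checks obey $F_2(r)=\overline{G_2(\bar r)}$.

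Substituting the four functions into the linear combination gives
\[
F^s_\theta(r)=\frac{1}{2r}\bigl(i\cos\theta+\sin\theta\bigr), \qquad G^s_\theta(\bar r)=\frac{1}{2\bar r}\bigl(-i\cos\theta+\sin\theta\bigr),
\]
and the last move is purely algebraic: using Euler's formula to recognise $i\cos\theta+\sin\theta=i\,e^{-i\theta}$ and $-i\cos\theta+\sin\theta=-i\,e^{i\theta}$ turns these into the claimed $F^s_\theta=\frac{i}{2}\frac{e^{-i\theta}}{r}$ and $G^s_\theta=\frac{-i}{2}\frac{e^{i\theta}}{\bar r}$. The only real computation is extracting the catenoid data $F_2,G_2$; everything afterwards is elementary. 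I therefore expect the sole (minor) obstacle to be doing the real/imaginary separation in $x_2+it_2$ correctly and verifying the conjugacy relation, rather than any conceptual difficulty.
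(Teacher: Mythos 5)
Your proposal is correct and follows essentially the same route as the paper: both rest on the identity $x^s_\theta \mp t^s_\theta = \cos\theta\,(x_1\mp it_1)+\sin\theta\,(x_2\mp it_2)$, the helicoid/catenoid data $x_1-it_1=-\tfrac{i}{2}(\bar r-\tfrac1r)$, $x_2-it_2=\tfrac12(\bar r+\tfrac1r)$, and the Euler-formula regrouping $i\cos\theta+\sin\theta=i e^{-i\theta}$. The only cosmetic difference is that you extract $F_2(r)=\tfrac{1}{2r}$, $G_2(\bar r)=\tfrac{1}{2\bar r}$ for the catenoid separately and then apply the linearity $F^s_\theta=\cos\theta\,F_1+\sin\theta\,F_2$, whereas the paper combines the two surfaces first and reads $F^s_\theta$, $G^s_\theta$ off the summed expression; the computations are identical in substance.
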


\begin{proof}
$x_{\theta}^s = cos(\theta) x_1 + sin(\theta) x_2 ,$
$t_{\theta}^s = i  cos(\theta) t_1 + i sin(\theta) t_2,$
$\phi_{\theta}^s = cos(\theta) \phi_1 + sin(\theta) \phi_2.$

$x_{\theta}^s - t_{\theta}^s = cos(\theta) (x_1 - i t_1)  + sin(\theta) (x_2 - i t_2)$

$x_{\theta}^s + t_{\theta}^s = cos(\theta) (x_1 + i t_1)  + sin(\theta) (x_2 + i t_2)$

$x_1 - i t_1 = -\frac{i}{2} (\bar{r} - \frac{1}{r})$

$x_2 - i t_2 = \frac{1}{2} ( \bar{r} + \frac{1}{r})$

$x_{\theta}^s - t_{\theta}^s = -\frac{i}{2} \bar{r} e^{i \theta} + \frac{i}{2} \frac{e^{-i \theta}}{r}$. 

$x_{\theta}^{s} + t_{\theta}^{s} = \frac{i}{2} r e^{-i \theta} - \frac{i}{2} \frac{e^{i \theta}}{\bar{r}}$

Thus $F_{\theta}^s(r) = \frac{i}{2} \frac{e^{-i \theta}}{r}$ and $G^s(\bar{r})  = - \frac{i}{2} \frac{e^{i \theta}}{\bar{r}}.$

We can check that $F_{\theta}^s(r) = \overline{G_{\theta}^{s} ( \bar{r})}.$  

Recall:

$\phi_{\theta}^{s} = \int r F_{\theta}^{s \prime} (r)d r + \int \bar{r} G_{\theta}^{s \prime} (\bar{r}) d \bar{r}.$

Thus

$\phi_{\theta}^{s} =  - \frac{i}{2} (ln r)  e^{- i \theta} + \frac{i}{2} (ln \bar{r}) e^{i \theta}. $

If $\theta = 0$ this corresponds to the wick rotated helicoid, namely
$\phi_0^s =   \rm{Im} (ln r) $

and if $\theta = \frac{\pi}{2}$, this corresponds to the  wick rotated 
catenoid, namely,

$\phi_{\frac{\pi}{2}}^s = - \rm{Re} (ln r) $
\end{proof}

\section{$\theta$-invariants}

Let $X^s_{\theta}= (x^s_{\theta},  t^s_{\theta}, \phi^s _{\theta})$ be a soliton 
solution as before. 

We show that the coefficients of the first fundamental form, and hence the 
Born-Infeld action is independent of $\theta$.

\begin{proposition}
Let $r = r_1 + i r_2$. Then 
$E^s= x^{s2}_{\theta, r_1} - t^{s 2}_{\theta, r_1} + \phi^{s2} _{\theta, r_1} $ remains invariant with respect to $\theta$. 
Similarly, 
$G^s= x^{s 2}_{\theta, r_2} - t^{s 2}_{\theta, r_2} + \phi^{s2} _{\theta, r_2} $  remains invariant with respect to $\theta$. 
Also, $ F^s = x^s_{\theta, r_1} x^s_{\theta, r_2} - t^s_{\theta, r_1} t^s_{\theta, r_2} + \phi^s _{\theta, r_1} \phi^s _{\theta, r_2} =0$ for all $\theta$.
Thus $A^s = \int \sqrt{E^sG^s - F^{s2}} d r_1 d r_2 = \int \sqrt{1 + \phi^{s2}_{x^s} - \phi^{s2}_{t^s}} dx^s dt^s$ is $\theta$ invariant.
\end{proposition}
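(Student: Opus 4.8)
The plan is to pass to the complex parameter $r=r_1+ir_2$ and read off the first fundamental form from the two null vectors $S_{\theta,r}=\frac12(S_{\theta,r_1}-iS_{\theta,r_2})$ and $S_{\theta,\bar{r}}=\frac12(S_{\theta,r_1}+iS_{\theta,r_2})$, using the Lorentzian pairing $\langle(a_1,a_2,a_3),(b_1,b_2,b_3)\rangle=a_1b_1-a_2b_2+a_3b_3$ that underlies the definitions of $E^s$, $G^s$, $F^s$. First I would differentiate the representation established in the proof of the first Proposition of Section 2. Writing $F'=F_{\theta}^{s\prime}(r)$ and $G'=G_{\theta}^{s\prime}(\bar{r})$, the relations for $x^s_\theta\mp t^s_\theta$ and $\phi^s_\theta$ give
\begin{align*}
x^s_{\theta,r}&=\tfrac12(1-r^2)F', & t^s_{\theta,r}&=-\tfrac12(1+r^2)F', & \phi^s_{\theta,r}&=rF',\\
x^s_{\theta,\bar{r}}&=\tfrac12(1-\bar{r}^2)G', & t^s_{\theta,\bar{r}}&=\tfrac12(1+\bar{r}^2)G', & \phi^s_{\theta,\bar{r}}&=\bar{r}\,G'.
\end{align*}

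A direct substitution then gives the conformality (isotropy) identities $\langle S_{\theta,r},S_{\theta,r}\rangle=(x^s_{\theta,r})^2-(t^s_{\theta,r})^2+(\phi^s_{\theta,r})^2=0$ and likewise $\langle S_{\theta,\bar{r}},S_{\theta,\bar{r}}\rangle=0$. Expanding $E^s$, $G^s$, $F^s$ in terms of the three complex pairings, the two vanishing ones immediately force $E^s=G^s$ and $F^s=0$, so that only the mixed pairing survives:
$$E^s=G^s=2\langle S_{\theta,r},S_{\theta,\bar{r}}\rangle,\qquad F^s=0.$$
Computing the mixed pairing yields $\langle S_{\theta,r},S_{\theta,\bar{r}}\rangle=\frac12 F_{\theta}^{s\prime}(r)\,G_{\theta}^{s\prime}(\bar{r})\,(1+r\bar{r})^2$, so that \emph{all} the $\theta$-dependence of the metric is concentrated in the single product $F_{\theta}^{s\prime}(r)\,G_{\theta}^{s\prime}(\bar{r})$.

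The crux is to show this product is independent of $\theta$, and here I would invoke the standing hypothesis that $X$ and $Y$ are harmonic conjugates, i.e. $X+iY$ is holomorphic in $r$. Inspecting the $\phi$-component, $\phi_1+i\phi_2=\int r(F_1+iF_2)^{\prime}\,dr+\int \bar{r}(G_1+iG_2)^{\prime}\,d\bar{r}$ is holomorphic only if $(G_1+iG_2)^{\prime}\equiv 0$, hence $G_2=iG_1$; combined with the reality relation $F_j(r)=\overline{G_j(\bar{r})}$ this forces $F_2=-iF_1$. Consequently $F_{\theta}^{s}=e^{-i\theta}F_1$ and $G_{\theta}^{s}=e^{i\theta}G_1$, so $F_{\theta}^{s\prime}(r)\,G_{\theta}^{s\prime}(\bar{r})=F_1'(r)G_1'(\bar{r})$ is manifestly $\theta$-free. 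This multiplication by $e^{\mp i\theta}$ is precisely the Wick-rotated rotation, i.e. the Lorentz boost announced in the introduction, and it is what leaves the induced metric fixed. Therefore $E^s$ and $G^s$ are $\theta$-invariant and $F^s\equiv 0$.

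Finally, since $E^s=G^s$ and $F^s=0$, one has $E^sG^s-F^{s2}=(E^s)^2$, so $\sqrt{E^sG^s-F^{s2}}=E^s$ is $\theta$-invariant and so is $A^s=\int E^s\,dr_1\,dr_2$. The second equality in the statement is just the change of variables from $(r_1,r_2)$ to $(x^s,t^s)$: the Jacobian absorbs the conformal factor and the Lorentzian area element $\sqrt{E^sG^s-F^{s2}}\,dr_1\,dr_2$ becomes the Born--Infeld Lagrangian density $\sqrt{1+\phi^{s2}_{x^s}-\phi^{s2}_{t^s}}\,dx^s\,dt^s$, exactly as in the hodographic computation of \cite{D}, \cite{W}. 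I expect the main obstacle to be the step establishing $F_2=-iF_1$, $G_2=iG_1$; the rest is a bookkeeping substitution, but without the conjugate-harmonic relation the product $F_{\theta}^{s\prime}G_{\theta}^{s\prime}$ genuinely depends on $\theta$, so this is exactly where the hypothesis, and the Lorentz symmetry, does the real work.
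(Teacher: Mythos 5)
Your proof is correct, but it follows a genuinely different route from the paper's. The paper stays with the real minimal surfaces: it first observes that because $t^s_\theta = i\,t_\theta$, the Lorentzian quantity $x^{s2}_{\theta,r_1}-t^{s2}_{\theta,r_1}+\phi^{s2}_{\theta,r_1}$ equals the Euclidean quantity $x^{2}_{\theta,r_1}+t^{2}_{\theta,r_1}+\phi^{2}_{\theta,r_1}$ for the underlying family of minimal surfaces, and then kills the $\theta$-dependence by substituting the Cauchy--Riemann relations $X_{2r_1}=-X_{1r_2}$, $X_{2r_2}=X_{1r_1}$ and invoking isothermality ($X_{1r_1}\!\cdot\!X_{1r_1}=X_{1r_2}\!\cdot\!X_{1r_2}$, $X_{1r_1}\!\cdot\!X_{1r_2}=0$); no integral representation is needed. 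You instead work entirely inside the hodographic $F$--$G$ representation: you verify the two isotropy identities $\langle S_{\theta,r},S_{\theta,r}\rangle=\langle S_{\theta,\bar r},S_{\theta,\bar r}\rangle=0$, reduce the whole metric to $E^s=G^s=F^{s\prime}_{\theta}G^{s\prime}_{\theta}(1+r\bar r)^2$, $F^s=0$, and then show that harmonic conjugacy forces $G_2=iG_1$ (up to a constant) and hence $F^{s}_{\theta}=e^{-i\theta}F_1$, $G^{s}_{\theta}=e^{i\theta}G_1$ up to constants, so the product $F^{s\prime}_{\theta}G^{s\prime}_{\theta}=F_1'G_1'=|G_1'|^2$ is $\theta$-free. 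Your computations check out (including the identity $\tfrac14(1-r^2)(1-\bar r^2)+\tfrac14(1+r^2)(1+\bar r^2)+r\bar r=\tfrac12(1+r\bar r)^2$ and the consistency with the helicoid/catenoid example, where $F_2=\tfrac1{2r}=-iF_1$). What your route buys is an explicit closed form for $E^s$ and a transparent identification of the invariance mechanism as the phase rotation $F\mapsto e^{-i\theta}F$, $G\mapsto e^{i\theta}G$, which ties directly to the Lorentz symmetry the paper derives separately afterwards; what the paper's route buys is brevity and independence from the representation formulas, since it needs only conjugacy and isothermality. Two small points of care in your write-up: the constants of integration in $G_1+iG_2=\mathrm{const}$ should be mentioned (they are harmless because only $F'$, $G'$ enter the metric), and the final change-of-variables identity for $A^s$ is asserted rather than proved in both your argument and the paper's, so you are not losing anything there.
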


\begin{proof}
We have $$X^s_{\theta}= X_1^s\cos\theta +X_2^s\sin\theta$$ where corresponding $X_1$ and  $X_2$ are harmonic conjugate minimal surafces 
in $r_1$ and $r_2$ variable, and
$$\frac{\partial X_1}{\partial r_1}= \frac{\partial X_2}{\partial r_2}\text{ and } \frac{\partial X_1}{\partial r_2}= -\frac{\partial X_2}{\partial r_1}.$$
If $X_i(r_1,r_2)= (x_i, t_i, \phi_i)$, we have
 $$X^s_{\theta}=( x_1(r,s)\cos\theta+x_2(r,s)\sin\theta, i(t_1(r,s)\cos\theta$$
 $$+t_2(r,s)\sin\theta), \phi_1(r,s)\cos\theta+\phi_2(r,s)\sin\theta)$$

As $X_1$ and $X_2$ are conjugate we have:

$$\frac{\partial X_1}{\partial r_1}= \frac{\partial X_2}{\partial r_2}\text{ and } \frac{\partial X_1}{\partial r_2}= -\frac{\partial X_2}{\partial r_1}.$$
Then
\begin{eqnarray*}
 x^{s2}_{\theta, r_1} - t^{s 2}_{\theta, r_1} + \phi^{s2} _{\theta, r_1} 
 &=&x^{2}_{\theta, r_1} + t^{ 2}_{\theta, r_1} + \phi^{2} _{\theta, r_1}  \\
&=& (X_{1r_1}\cos\theta+X_{2r_1}\sin\theta).(X_{1r_1}\cos\theta+X_{2r_1}\sin\theta) \\
&=& (X_{1r_1}\cos\theta-X_{1r_2}\sin\theta).(X_{1r_1}\cos\theta-X_{1r_2}\sin\theta)\\
&=& X_{1r_1}.X_{1r_1}\cos^2\theta+\sin^2\theta X_{1r_2}.X_{1r_2}\\
& & +\cos\theta\sin\theta X_{1r_1}.X_{1r_2}-\cos\theta\sin\theta X_{1r_1}.X_{1r_2} 
\end{eqnarray*}

Now we have $ X_{1r_1}.X_{1r_1}= X_{1r_2}.X_{1r_2}$, (since $r_1$ and $r_2$ are 
isothermal coordinates for $X_1$), 

\begin{eqnarray*}
E^s &=& x^{s2}_{\theta, r_1} - t^{s 2}_{\theta, r_1} + \phi^{s2} _{\theta, r_1} \\
&=& X_{1r_1}.X_{1r_1}
\end{eqnarray*}

  Hence $E^s$ is independent of $\theta$.

\begin{eqnarray*}
& & x^s_{\theta, r_1} x^s_{\theta, r_2} - t^s_{\theta, r_1} t^s_{\theta, r_2} + \phi^s _{\theta, r_1} \phi^s _{\theta, r_2} \\
&=&x_{\theta, r_1} x_{\theta, r_2}+ t_{\theta, r_1} t_{\theta, r_2} + \phi_{\theta, r_1} \phi_{\theta, r_2}\\
&=&(X_{1r_1}\cos\theta+X_{2r_1}\sin\theta).(X_{1r_2}\cos\theta+X_{2r_2}\sin\theta)\\
&=&(X_{1r_1}\cos\theta-X_{1r_2}\sin\theta).(X_{1r_2}\cos\theta+X_{1r_2}\sin\theta)\\
&=& X_{1r_1}.X_{1r_2}\cos^2\theta-\sin^2\theta X_{1r_2}.X_{1r_1}\\
& & +\cos\theta\sin\theta X_{1r_1}.X_{1r_1}-\cos\theta\sin\theta X_{1r_2}.X_{1r_2}
\end{eqnarray*}
Again $ X_{1r_1}.X_{1r_1}= X_{1r_2}.X_{1r_2}$ and $X_{1r_1}.X_{1r_2}=0$, we have $F^s=0$.

Similiary we can prove for $G^s$.    Hence we see that $E^s, F^s, G^s$ all are independent of $\theta$ which in turn gives $A^s$ is independent of $\theta$.

\end{proof}

{\bf Lorentz Invariance of the Born-Infeld equation}

There is a well-known symmetry, namely, the Lorentz invariance of the 
Born-Infeld equation which is reponsible for these invariant quantities.
We rederive it here.

\begin{proposition}
There is a symmetry in the Born-Infeld equation, namely
if 
$\left[\begin{array}{c}
x^{\prime}  \\
t^{\prime} 
\end{array}
\right] = \left[\begin{array}{cc}
                        cosh(\theta) & sinh(\theta)  \\
                         sinh(\theta) & cosh(\theta)             
                       \end{array} \right] \left[\begin{array}{c}
                                                  x  \\
                                                  t 
                                                \end{array} \right] $
then $\phi (x^{\prime}, t^{\prime}) $ satisfies the same B-I equation with
$x$ and $t$ replaced by $x^{\prime}$ and $ t^{\prime}$. 
\end{proposition}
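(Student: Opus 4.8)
The plan is to prove the invariance directly by a change of variables, exploiting the fact that the boost is linear with \emph{constant} coefficients. Writing $x' = x\cosh\theta + t\sinh\theta$ and $t' = x\sinh\theta + t\cosh\theta$, the first-order operators transform as $\partial_x = \cosh\theta\,\partial_{x'} + \sinh\theta\,\partial_{t'}$ and $\partial_t = \sinh\theta\,\partial_{x'} + \cosh\theta\,\partial_{t'}$, so that $\phi_x = \cosh\theta\,\phi_{x'} + \sinh\theta\,\phi_{t'}$ and $\phi_t = \sinh\theta\,\phi_{x'} + \cosh\theta\,\phi_{t'}$. The first fact I would record is the scalar identity $\phi_x^2 - \phi_t^2 = \phi_{x'}^2 - \phi_{t'}^2$, which is immediate from $\cosh^2\theta - \sinh^2\theta = 1$; this is the structural fact that drives the whole computation. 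I would also note the inverse relations $\phi_{x'} = \cosh\theta\,\phi_x - \sinh\theta\,\phi_t$ and $\phi_{t'} = \cosh\theta\,\phi_t - \sinh\theta\,\phi_x$, since these combinations are exactly what will reappear at the end.

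Because the coefficients are constant, the operators $\partial_x,\partial_t$ pass through $\cosh\theta,\sinh\theta$, and applying them a second time gives $\phi_{xx} = \cosh^2\theta\,\phi_{x'x'} + 2\cosh\theta\sinh\theta\,\phi_{x't'} + \sinh^2\theta\,\phi_{t't'}$, $\phi_{tt} = \sinh^2\theta\,\phi_{x'x'} + 2\cosh\theta\sinh\theta\,\phi_{x't'} + \cosh^2\theta\,\phi_{t't'}$, and $\phi_{xt} = \cosh\theta\sinh\theta\,(\phi_{x'x'} + \phi_{t't'}) + (\cosh^2\theta + \sinh^2\theta)\,\phi_{x't'}$. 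Each second derivative is thus a quadratic form in $(\cosh\theta,\sinh\theta)$ applied to the primed Hessian.

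Next I would substitute these expressions, together with $\phi_x$ and $\phi_t$, into the operator $(1-\phi_t^2)\phi_{xx} + 2\phi_x\phi_t\phi_{xt} - (1+\phi_x^2)\phi_{tt}$ and collect the coefficients of $\phi_{x'x'}$, $\phi_{x't'}$, and $\phi_{t't'}$ separately. The coefficient of $\phi_{x'x'}$ works out to $\cosh^2\theta - \sinh^2\theta - (\phi_t\cosh\theta - \phi_x\sinh\theta)^2 = 1 - \phi_{t'}^2$, using the inverse relation above; symmetrically the coefficient of $\phi_{t't'}$ collapses to $-(1+\phi_{x'}^2)$, and the coefficient of $\phi_{x't'}$ collapses to $2\phi_{x'}\phi_{t'}$. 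Hence the operator reproduces $(1-\phi_{t'}^2)\phi_{x'x'} + 2\phi_{x'}\phi_{t'}\phi_{x't'} - (1+\phi_{x'}^2)\phi_{t't'}$, i.e.\ the same B-I equation in the primed variables. I expect the main obstacle to be purely organizational: keeping the bookkeeping of the nine cross terms straight so that the mixed contributions assemble into the perfect squares $(\phi_t\cosh\theta - \phi_x\sinh\theta)^2$ and $(\phi_x\cosh\theta - \phi_t\sinh\theta)^2$ rather than an unrecognizable mess.

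Finally, I would point out the conceptual reason behind the cancellations, which also ties this proposition to the previous section. The B-I equation is the Euler--Lagrange equation of the action $\int \sqrt{1 + \phi_x^2 - \phi_t^2}\,dx\,dt$; the boost preserves the quadratic form $\phi_x^2 - \phi_t^2$ (shown above) and has Jacobian $\cosh^2\theta - \sinh^2\theta = 1$, so it leaves both the Lagrangian density and the area element invariant. The action is therefore manifestly Lorentz invariant, and its critical points---the solutions of the B-I equation---are carried to solutions, which is exactly the asserted symmetry and explains the $\theta$-invariance of $A^s$ found earlier.
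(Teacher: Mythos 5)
Your proof is correct and follows essentially the same route as the paper: apply the chain rule to the first and second derivatives, substitute into the B-I operator, and collect the coefficients of the primed Hessian entries into $1-\phi_{t'}^2$, $2\phi_{x'}\phi_{t'}$, and $-(1+\phi_{x'}^2)$. The only difference is one of direction -- the paper starts from a general invertible matrix with entries $a,b,c,d$ and derives the conditions $a=d$, $b=c$, $a^2-b^2=1$ forcing it to be a hyperbolic rotation, whereas you verify the specific boost directly (which is all the proposition as stated requires) and append the conceptual explanation via invariance of the action.
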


\begin{proof}
Let $\left(
       \begin{array}{cc}
         a & b \\
         c & d \\
       \end{array}
     \right),$ $ad-bc \neq 0,$ denote the symmetry to Born-Infield equation, then we have: $\phi_{x'}= a\phi_x+c\phi_t,$   $\phi_{x'x'}= a^2\phi_{xx}+c^2\phi_{tt}+2ac\phi_{xt},$ $\phi_{t'}= b\phi_x+d\phi_t$, $\phi_{t't'}= b^2\phi_{xx}+d^2\phi_{tt}+2bd\phi_{xt}$, and
$\phi_{x't'}= ab\phi_{xx}+cd\phi_{tt}+(bc+ad)\phi_{xt}$.  Hence B-I equation for $\phi(x',t')$ changes to
\begin{eqnarray}& &(1-\phi_{t'}^2)\phi_{x'x'}+2\phi_{x'}\phi_{t'}\phi_{x't'}-(1+\phi_{x'}^2)\phi_{t't'}\label{B-I'}\\
&=&[1-(b\phi_x+d\phi_t)^2](a^2\phi_{xx}+c^2\phi_{tt}+2ac\phi_{xt})+2(a\phi_x+c\phi_t)(b\phi_x+d\phi_t)\nonumber\\
& &[ab\phi_{xx}cd\phi_{tt}+(ad+bc)\phi_{xt}]-[1+(a\phi_x+c\phi_t)^2](b^2\phi_{xx}+d^2\phi_{tt}+2bd\phi_{xt})\nonumber
\end{eqnarray}
In above expression \ref{B-I'}, coefficient of $\phi_{xx}$ is
\begin{eqnarray*}
& & a^2-(b\phi_x+d\phi_t)^2a^2+2ab(a\phi_x+c\phi_t)(b\phi_x+d\phi_t)-b^2-(a\phi_x+c\phi_t)^2b^2\\
&=&(a^2-b^2)+\phi_x^2(a^2b^2+2a^2b^2-a^2b^2)+\phi_t^2(-a^2d^2+2abcd-b^2c^2)\\
& & +\phi_x\phi_t(-bda^2+2abad+2abcb-2abb^2)\\
&=&(a^2-b^2)-\phi_t^2(a^2d^2+b^2c^2-2abcd)
\end{eqnarray*}
Hence for invariance of B-I equation we must have $a^2-b^2=1$, and $a=d$, $b=c$.  With these condition coefficient for $\phi_{xx}$ in equation \ref{B-I'} will be $$(a^2-b^2)-\phi_t^2(a^2d^2+b^2c^2-2abcd)= (a^2-b^2)[1-\phi_t^2(a^2-b^2)]= 1-\phi_t^2$$

When $a^2-b^2=1$ $a=d$ and $b=c$, we have coefficient of $\phi_{xt}$ in equation \ref{B-I'} as $ 2 \phi_{xt}$.

In the same way the coefficient of $\phi_{tt}$ in equation \ref{B-I'} $=(1+\phi_x)^2$.   Hence \ref{B-I'} changes to B-I equation in $\phi(x,t)$ that is we have $\phi(x',t')$  is a soliton if and only if $\phi(x,t)$ is a solition. Also we have $a^2-b^2= 1 $ if and only if $a=\cosh\theta$ and $b=\sinh\theta$.

That is under the coordinate change $\left[\begin{array}{cc}
                        cosh(\theta) & sinh(\theta)  \\
                         sinh(\theta) & cosh(\theta)
                       \end{array} \right]$,  solution to the Born-Infield equation remain invariant.
\end{proof}

It is easy to check that this symmetry keeps $A^s$ invariant.
This is expected  
 since the B-I equation is obtained by minimizing this 
action.

\section{Many more examples}

Recall the Weierstrass-Enneper representation of minimal surfaces, 
namely, in the neighborhood of a
nonumbilic interior point, any minimal surface can
be represented in terms of $w$ as follows, ~\cite{N},
\begin{eqnarray*}
x(\zeta) &=& x_0 + \rm{Re} \int_{\zeta_0}^{\zeta} (1-w^2) R(w) \,d w\\
t(\zeta) &=& t_0 + \rm{Re} \int_{\zeta_0}^{\zeta} i (1 + w^2) R(w) \,d w\\
\phi(\zeta) &=& \phi_0 + \rm{Re} \int_{\zeta_0}^{\zeta} 2 w R(w) \,d w
\end{eqnarray*}

This is an isothermal representation (w.r.t.  $\zeta_1$ and $\zeta_2$ where 
$\zeta = \zeta_1 + i \zeta_2.$)

Various examples of minimal surfaces are as follows, ~\cite{N}, page 148.  

$R(w) = 1$ leads to the Enneper minimal surface.

$R(w) = \frac{\kappa}{2 w^2}$, $\kappa$ real, leads to the catenoid, 
$\frac{z}{\kappa} = cosh^{-1} ( \frac{\sqrt{x^2 + t^2}}{|\kappa|})$. 

$R(w) = \frac{i \kappa}{2 w^2}$, $\kappa$ real, leads to the right helicoid
$\frac{z}{\kappa} = tan^{-1} ( \frac{x}{t})$. 

$R(w) = \frac{\kappa e^{i\alpha}}{2 w^2}$ leads to the general helicoid.

$R(w) = \frac{2}{(1 - w^4)}$ leads to the Scherk's minimal surface.

$R(w) = \frac{-2aisin(2 \alpha)}{ (1 + 2 w^2 cos(2 \alpha) + w^4)} $ , 
$0 < \alpha < \pi/2$, $a >0$ leads to the general Scherk's minimal surface.

$R(w) = 1 - w^{-4}$ (and substituting $-t$ for $t$) leads to the Henneberg surface.

$R(w) = \frac{i a (w^2 -1)}{w^3} - \frac{ib}{2 w^2}$, $a$ and $b$ real, and setting $w = e^{-i \gamma/2} $, leads to the general Enneper surface and , in particular, for $a=1$ and $b=0$, to the Catalan's surface.

$R(w) = (1 - 14 w^4 + w^8)^{-1/2}$ leads to the Schwarz-Riemann minimal surface.

Description and pictures of these minimal surfaces can be found in  ~\cite{N}.

These are in isothermal representation. 

To find their $harmonic$ $conjugate$ minimal surfaces, we need to replace $R(w) $ by $-i R(w)$. 

Because if 
\begin{eqnarray*}
x_1(\zeta) &=& x_{01} + \rm{Re} \int_{\zeta_0}^{\zeta} (1-w^2) R(w) \,d w\\
t_1(\zeta) &=& t_{01} + \rm{Re} \int_{\zeta_0}^{\zeta} i (1 + w^2) R(w) \,d w\\
\phi_1(\zeta) &=& \phi_{01} + \rm{Re} \int_{\zeta_0}^{\zeta} 2 w R(w) \,d w
\end{eqnarray*}

and 
\begin{eqnarray*}
x_2(\zeta) &=& x_{02} + \rm{Re} (-i \int_{\zeta_0}^{\zeta} (1-w^2) R(w) \,d w)\\
           &=& x_{02} + \rm{Im}  \int_{\zeta_0}^{\zeta} (1-w^2) R(w) \,d w\\
t_2(\zeta) &=& t_{02} + \rm{Re} (-i \int_{\zeta_0}^{\zeta} i (1 + w^2) R(w) \,d w)\\
&=& t_{02} + \rm{Im}  \int_{\zeta_0}^{\zeta} i (1 + w^2) R(w) \,d w\\
\phi_2(\zeta) &=& \phi_{02} + \rm{Re} (-i \int_{\zeta_0}^{\zeta} 2 w R(w) \,d w)\\
 &=& \phi_{02} + \rm{Im}  \int_{\zeta_0}^{\zeta} 2 w R(w) \,d w
\end{eqnarray*}

then,

$x_1 + i x_2 = x_{01} + i x_{02} + \int_{\zeta_0}^{\zeta} (1-w^2) R(w) \,d w $

$t_1 + i t_2 =  t_{01} + i t_{02} +  \int_{\zeta_0}^{\zeta} i (1 + w^2) R(w) \,d w$

$\phi_1 + i \phi_2 =  \phi_{01} + i  \phi_{02} +  \int_{\zeta_0}^{\zeta} 2 w R(w) \,d w .$

Since the right-hand side are holomorphic functions of 
$\zeta = \zeta_1 + i \zeta_2$, 

$(x_2, t_2, \phi_2)$ is harmonic conjugate of  $(x_2, t_2, \phi_2)$ are 
and the representations above are isothermal (w.r.t. $\zeta_1$ and $\zeta_2$) .

Thus we can combine $cos {\theta} (x_1, t_1, \phi_1) + sin{\theta} (x_2, t_2, \phi_2)$ and get another minimal surface.

By ``wick rotating'', namely, $ t \rightarrow i t$, we get a one -paramter family of solitons, 
$cos {\theta} (x_1, it_1, \phi_1) + sin{\theta} (x_2, it_2, \phi_2)$

Each choice of $R(w)$ gives us an example. Thus we get many examples.

{\bf Remark:} We re-emphasize that 
the process described here enables us to generate other solutions of the B-I, 
given one complex solution which  can be
wick rotated to get a real minimal surface (which can be then be 
written in isothermal coordinates using 
the Weierstrass-Enneper repesentation). Then one can easily write the harmonic 
conjugate of the minimal surface in the same form and then make the 
one-parameter combination of the 
two mentioned above and wick rotate back to 
get the soliton family which starts from a soliton solution which is the initial solution with $ t \rightarrow - t$, (note that the B-I 
equation is invariant under $t \rightarrow -t$), and ends at a different 
soliton solution.  We have given many examples of this process. 

{\bf Remark:} We are using the word solition for solutions of the B-I equations.
But since these are complex solutions, they need not be actual solitons.

{\bf Remark:} Given a minimal surface in isothermal coordinates, its harmonic conjugate in isothermal coordinates is also a minimal surface.
This is because  ${\bf{X} = \bf{X}} (u,v)$ is a minimal surface iff
${\bf X}$ is isothermal (w.r.t $u$ and $v$) and harmonic, ~\cite{O}. 
(Here ${\bf{X}} (u,v) = (x(u,v), t(u,v), \phi(u,v))$.)

\vspace{.5in}

{\bf Correction:}  There are  corrections in ~\cite{D}. 
Equation $(14)$ should read 
$ \bar{z} = \bar{z}_0 + F(\bar{\zeta}) - \int \bar{\zeta}^2 G^{\prime}(\bar{\zeta}).$

Here $F(r) = \overline{G(\bar{r})} .$

Also, in ~\cite{D} our representation is a little different from the 
Weierstrass-Enneper representation, though both are isothermal. The domain of 
validity 
of  the W-E representation is away from the umbilical points, namely, 
$\phi_{x x} \phi_{y y} - \phi_{x y }^2 =0,$
while our 
representation fails where $\phi_{z z} \phi_{\bar{z} \bar{z}} - \phi_{\bar{z} z}^2 =0.$

\vspace{.5in}

{\bf Acknowledgement:} 
The first author  would like to thank Professor Randall Kamien for the 
observation that
the minimal surface equation is just the wick rotated Born-Infeld equation.

\end{document}